\def\keywords#1{%
  \let\@makefnmark\relax 
  \long\def\@makefntext##1{%
    \noindent\textbf{Keywords:} ##1\par}%
  \footnotetext{#1}%
}
\renewcommand\S{\mathcal S}
\newtheorem{theorem}{Theorem}[section]
\newtheorem{lemma}[theorem]{Lemma}
\theoremstyle{definition}
\newtheorem{remark}[theorem]{Remark}
\newtheorem{example}[theorem]{Example}
\newcommand{\thistheoremname}{}
\newtheorem*{genericthm*}{\thistheoremname}
\newenvironment{namedthm*}[1]
  {\renewcommand{\thistheoremname}{#1}%
   \begin{genericthm*}}
  {\end{genericthm*}}
\title{Pattern avoidance in non-crossing and non-nesting permutations}
\author[1]{Kassie Archer}
\author[1]{Robert P. Laudone}
\affil[1]{{\small Department of Mathematics, United States Naval Academy, Annapolis, MD, 21402}}
\affil[ ]{{\small Email: \{karcher, laudone\}@usna.edu }}
\date{}
\begin{document}

\maketitle

\begin{abstract}
    Non-crossing and non-nesting permutations are variations of the well-known Stirling permutations. A permutation $\pi$ on $\{1,1,2,2,\ldots, n,n\}$ is called non-crossing if it avoids the crossing patterns $\{1212,2121\}$ and is called non-nesting if it avoids the nesting patterns $\{1221,2112\}.$ Pattern avoidance in these permutations has been considered in recent years, but it has remained open to enumerate the non-crossing and non-nesting permutations that avoid a single pattern of length 3. In this paper, we provide generating functions for those non-crossing and non-nesting permutations that avoid the pattern 231 (and, by symmetry, the patterns 132, 213, or 312). 
\end{abstract}

\keywords{Pattern avoidance; Stirling permutations; Generating functions.\\
\textbf{2020 Mathematics Subject Classification:} 05A05, 05A15. }

\section{Introduction and Background}

Stirling permutations were defined by Gessel and Stanley in \cite{GS} to be permutations $\pi$ of the multiset $\{1,1,2,2,\ldots, n,n\}$ that avoid the pattern 212. In other words, for any such permutation $\pi$ and indices $i<j<k,$ if $\pi_i=\pi_k,$ then we must have $\pi_j>\pi_i$. In \cite{GS}, they find that the number of such permutations is $(2n-1)!!$ and that their descent polynomials are closely related to the well-known Stirling polynomials (hence their name). Results regarding pattern-avoiding Stirling permutations can be found in \cite{CMM,KP,RW}, among others.  

Stirling permutations are naturally in bijection with increasing ordered rooted trees (see for example \cite{J08}). The authors in \cite{AGPS} removed the increasing condition and asked how this would affect the corresponding permutations. They found that ordered rooted labeled trees are in bijection with a variation of Stirling permutations called non-crossing permutations (also called quasi-Stirling permutations), which avoid the ``crossing'' patterns 1212 and 2121. 
Additionally, non-nesting permutations (also called canon permutations), introduced in \cite{E23} as a variation of non-crossing permutations, are those that avoid the ``nesting'' patterns 1221 and 2112. Both the non-crossing and non-nesting permutations on $\{1,1,2,2,\ldots, n,n\}$ are enumerated by $n!C_n$, where $C_n = \frac{1}{n+1}\binom{2n}{n}$ is the $n$-th Catalan number. 

This fact is easily explained by a bijection between these permutations and certain labeled matchings. Indeed, such a permutation $\pi$ of $\{1,1,2,2,\ldots,n,n\}$ can be viewed as a labeled matching of $[2n]$, by placing an arc with label $\ell$ between $i$ and $j$ if $\pi_i = \pi_j = \ell$.  From this viewpoint, 
the labeled matchings associated to the non-crossing and non-nesting permutations respectively avoid the matchings pictured below:

\begin{center}
    \begin{tikzpicture}
       \draw[line width=.5mm](0,0) to[bend left=45] (2,0);
       \draw[line width=.5mm](1,0) to[bend left=45] (3,0);
               \draw(-0.5,0) -- ++ (4,0);
    \draw[circle,fill] (0,0) circle[radius=1mm]node[below]{$a$};
    \draw[circle,fill] (1,0) circle[radius=1mm]node[below]{$b$};
    \draw[circle,fill] (2,0) circle[radius=1mm]node[below]{$a$};
    \draw[circle,fill] (3,0) circle[radius=1mm]node[below]{$b$};
    \end{tikzpicture} \qquad \qquad
 \begin{tikzpicture}
       \draw[line width=.5mm](0,0) to[bend left=45] (3,0);
       \draw[line width=.5mm](1,0) to[bend left=45] (2,0);
               \draw(-0.5,0) -- ++ (4,0);
    \draw[circle,fill] (0,0) circle[radius=1mm]node[below]{$a$};
    \draw[circle,fill] (1,0) circle[radius=1mm]node[below]{$b$};
    \draw[circle,fill] (2,0) circle[radius=1mm]node[below]{$b$};
    \draw[circle,fill] (3,0) circle[radius=1mm]node[below]{$a$};
    \end{tikzpicture}
\end{center}
Thus, the non-nesting permutations are in bijection with labeled non-nesting matchings while the non-crossing permutations are in bijection with labeled non-crossing matchings. It is well-known \cite{S15} that the number of non-crossing matchings and the number of non-nesting matchings are both equal to $C_n$, and one can assign labels to the matchings in $n!$ ways.

Pattern avoidance in these patterns appears to have some interesting applications. In the non-nesting case, Elizalde and Luo \cite{EL25} explain that there is a relationship to deformations of the braid arrangement (see \cite{B18}). In \cite{E23,E25}, the descent distribution (occurrences of the consecutive 21 pattern) in non-nesting permutations was shown to have an interesting relationship to the Naryana numbers and were used to prove a conjecture about lattice paths. 
In \cite{E21}, Elizalde showed the descent distribution in non-crossing permutations can be expressed as a compositional inverse of the generating function of Eulerian polynomials. In \cite{FL22}, the authors extended the result in \cite{E21} to prove a partial $\gamma$-positivity conjecture and find a bijective proof of an identity originally due to Yan and Zhu \cite{YZ21}.  In the non-crossing case, as mentioned above, these permutations are closely related with labeled rooted ordered trees. One can more generally relate pattern avoidance in the permutation with pattern avoidance in the tree (for example, see Section~\ref{sec:open}).

In \cite{AGPS}, Archer et.~al.~enumerate non-crossing permutations that avoid any set of at least two elements from $\mathcal{S}_3$. In \cite{EL25}, Elizalde and Luo prove analogous results in the non-nesting case. In both papers, the authors pose the question of determining the number of non-crossing (\cite[Section 5]{AGPS}) and non-nesting (\cite[Problem 1]{EL25}) permutations avoiding a single pattern in $\mathcal{S}_3$. In this paper, we resolve one of the two cases of these open questions for both non-crossing and non-nesting permutations. 

We denote by $\mathcal{P}_n(\sigma)$ and $\bar{\mathcal{P}}_n(\sigma)$ the set of non-nesting and non-crossing permutations, respectively, which avoid $\sigma \in \mathcal{S}_3$. We let $p_n(\sigma) = |\mathcal{P}_n(\sigma)|$ and $\bar{p}_n(\sigma) = |\bar{\mathcal{P}}_n(\sigma)|$ and we denote their respective generating functions as 
\[P(x) = \sum_{n\geq0} p_n(231) x^n \quad \text{ and } \quad \bar{P}(x) = \sum_{n\geq0} \bar{p}_n(231) x^n.
\]
In our two main results, we provide generating functions for non-nesting and non-crossing permutations that avoid the pattern $231$ (and, by symmetry, the patterns $132$, $213$ or $312$). 

In Section~\ref{sec:nonnesting-231}, we address the non-nesting case, and in Section~\ref{sec:non-crossing-231}, we address the non-crossing case, proving the following theorems.
\begin{namedthm*}{Theorem~\ref{thm: ImplicitP}}
The generating function $P(x)$ 
    is given implicitly by the equation:
    \[
    x^3P(x)^3 - (x^3+3x^2+x)P(x)^2+(2x^2-x+1)P(x) + x-1 = 0.
    \]
\end{namedthm*}
\begin{namedthm*}{Theorem~\ref{theorem: non crossing 231}}
The generating function $\bar{P}(x)$ is given implicitly by the equation:
\[
x^2\bar{P}(x)^4 - (x^2+x)\bar{P}(x)^3-x\bar{P}(x)^2+(x+1)\bar{P}(x)-1=0.
\]
\end{namedthm*}
In Section~\ref{sec: 122} we enumerate non-crossing permutations avoiding $122$, as well as $122$ and $\sigma$ for $\sigma \in \mathcal{S}_3$. Similar results for non-nesting permutations appear in \cite{EL25}. We conclude with some open questions, the primary one among them being the question of enumerating non-nesting and non-crossing permutations avoiding $321$.

\section{Non-Nesting Permutations avoiding 231}\label{sec:nonnesting-231}

Let $p_n(231) = |\mathcal{P}_n(231)|$ denote the number of non-nesting permutations that avoid $231$, let $q_n(231)$ be the number that begin with $\pi_1 = 1$ and let $r_n(231)$ denote the number that end with $\pi_{2n} = n$. We note that this means $q_0(231) = r_0(231) = 0$.  We begin with some necessary lemmas concerning the positions of $n$ in the permutation, which corresponds to the $n$-arc in the associated matching.

\begin{lemma} \label{lem: nbetween}
    Given $\pi \in \mathcal{P}_n(231)$, if $\pi_i = n$ and $\pi_{j} = n$ with $i < j$, there is at most one arc terminating between $i$ and $j$, and at most one arc originating between $i$ and $j$.
\end{lemma}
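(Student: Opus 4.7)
The plan is to argue each claim by contradiction, using the non-nesting constraint to force a rigid linear order on the relevant endpoints and then exhibiting a $231$ pattern in which the value $n$ plays the role of the ``$3$''. For the first claim, suppose arcs $\alpha$ and $\beta$, with values $v_\alpha, v_\beta < n$, both terminate strictly between $i$ and $j$, at positions $k_1 < k_2$. Let $s_\alpha, s_\beta$ denote their originating positions.

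The first step is to pin down the relative order of the six distinguished positions. Since the arc labeled $n$ spans $[i,j]$, the non-nesting condition applied to arc $\alpha$ against this $n$-arc rules out $s_\alpha \in (i,k_1)$, so $s_\alpha < i$; likewise $s_\beta < i$. A second application of non-nesting, now between arcs $\alpha$ and $\beta$, together with $k_1 < k_2$, forces $s_\alpha < s_\beta$ (otherwise arc $\alpha$ would nest inside arc $\beta$). Hence the positions appear in the order
\[
s_\alpha \;<\; s_\beta \;<\; i \;<\; k_1 \;<\; k_2 \;<\; j,
\]
with values $v_\alpha, v_\beta, n, v_\alpha, v_\beta, n$ respectively.

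The second step is to produce a $231$ pattern regardless of how $v_\alpha$ and $v_\beta$ compare. If $v_\alpha < v_\beta$, the positions $(s_\beta, i, k_1)$ carry values $(v_\beta, n, v_\alpha)$ with $v_\alpha < v_\beta < n$, which is the pattern $231$. If $v_\alpha > v_\beta$, the positions $(s_\alpha, i, k_2)$ carry values $(v_\alpha, n, v_\beta)$, again the pattern $231$. Either outcome contradicts $\pi \in \mathcal{P}_n(231)$. The second claim (at most one arc originating in $(i,j)$) is symmetric: two such arcs would, by non-nesting against the $n$-arc and against one another, be forced into the mirrored configuration $i < s_\alpha < s_\beta < j < t_\alpha < t_\beta$, and the same case split on $v_\alpha$ versus $v_\beta$ yields a $231$ pattern using $j$ as the ``$3$''.

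I do not anticipate any substantive obstacle. The only delicate point is that both types of non-nesting constraint are genuinely needed: forbidding each small arc from nesting inside the $n$-arc puts the small arcs' far endpoints outside $[i,j]$ on the correct side, and forbidding the two small arcs from nesting inside each other fixes their relative order. Once the configuration is rigid, the presence of $n$ between two occurrences of smaller values makes the $231$ pattern unavoidable.
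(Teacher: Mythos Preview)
Your argument is correct and matches the paper's proof essentially step for step: both use non-nesting against the $n$-arc to push the far endpoints outside $[i,j]$, then non-nesting between the two small arcs to fix their order, and finally a case split on which label is larger to exhibit a $231$ with $n$ as the ``$3$''. The paper additionally remarks up front that no arc can both originate and terminate in $(i,j)$, but this is implicit in your first step (where you force $s_\alpha < i$).
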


\begin{proof}
    First, note that there cannot be an arc that both originates and terminates between $i$ and $j$ because $\pi$ is non-nesting. Suppose there are two arcs that terminate between the two occurrences of $n$. If so, there are two elements $a,b$ with $\pi_{k_1} = \pi_{k_2} = a$ and $\pi_{\ell_1} = \pi_{\ell_2} = b$ where $k_1 < \ell_1 <i< k_2 < \ell_2<j$ since $\pi$ is non-nesting. If $a < b$ then $\pi_{\ell_1} \pi_i \pi_{k_2} = bna$ is a $231$ pattern, and if $a > b$ then $\pi_{k_1} \pi_{i} \pi_{\ell_2} = anb$ is a $231$ pattern.

    A similar argument shows that there is at most one arc originating between the two occurrences of $n$. If not, $\pi$ contains $abnab$ where this $n$ is the second occurrence of $n$, which again produces a $231$.
\end{proof}

\begin{lemma} \label{lem: maxminBetween}
    Given $\pi \in \mathcal{P}_n(231)$, with $\pi_i = n$ and $\pi_j = n$, $i < j$,
    \begin{itemize}
        \item if $\pi_{k_1} = \pi_{k_2} = a$ with $k _1 < i < k_2 < j$ then $a$ is the maximal element to the left of position $i$, and
        \item if $\pi_{k_1}= \pi_{k_2} = a$ with $i < k_1 < j < k_2$, then $a$ is the minimal element to the right of position $j$.
    \end{itemize}
\end{lemma}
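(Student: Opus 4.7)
The plan is to prove each bullet by a short argument by contradiction, extracting a forbidden $231$ pattern from any witness to non-maximality (respectively, non-minimality). Both cases exploit the same simple observation: since $n$ is the largest value in $\pi$, whenever $n$ occupies the middle slot of a three-position window, it automatically plays the role of the ``$3$'' in a potential $231$ occurrence. Thus the whole argument reduces to locating one extra letter on the appropriate side of $i$ or $j$ and assembling a triple of positions.

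For the first bullet, I would suppose for contradiction that $a$ is not the maximum entry in $\pi_1\pi_2\cdots\pi_{i-1}$. Then there is some position $\ell<i$ with $\pi_\ell=b>a$. Because the two copies of $n$ already sit at positions $i$ and $j$, we automatically have $b<n$. The triple of positions $\ell<i<k_2$ then carries the values $b,\,n,\,a$ with $a<b<n$, which is a $231$ pattern, contradicting $\pi\in \mathcal{P}_n(231)$.

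For the second bullet I would run the mirror argument. Assuming $a$ is not the minimum entry in $\pi_{j+1}\pi_{j+2}\cdots\pi_{2n}$, choose $\ell>j$ with $\pi_\ell=b<a$; again $b<n$ since both $n$'s are used. The positions $k_1<j<\ell$ then give the values $a,\,n,\,b$ with $b<a<n$, another $231$ pattern, a contradiction.

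I do not anticipate a real obstacle here: the lemma is essentially a one-line extraction of a $231$ pattern once the correct triple of indices is named. The only mild subtlety is remembering that the two occurrences of $n$ are already accounted for, which forces the auxiliary witness $b$ to satisfy $b<n$ and thereby legitimizes the $231$ comparison. No appeal to the non-nesting condition or to Lemma~\ref{lem: nbetween} is needed, so this lemma really is an immediate consequence of $231$-avoidance alone.
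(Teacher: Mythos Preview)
Your argument is correct and essentially identical to the paper's own proof: in each bullet the paper also picks a witness $b$ on the appropriate side and observes that the subsequence $bna$ (respectively $anb$) is a $231$ pattern. Your version simply names the positions explicitly and notes that $b<n$, which the paper leaves implicit.
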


\begin{proof}
    There are at least two arcs that originate before $i$, otherwise the statement is trivially true. Now, suppose for sake of contradiction that there are at least two arcs that originate before $i$ and the first bullet point is not true. If $a$ is not maximal, there is some $b$ appearing before position $i$ that is larger than $a$. In this case, $\pi$ contains $bna$ which is a $231$ pattern.

    Similarly, if an element originates between the two occurrences of $n$, but is not minimal, there is some element $b$ to the right of $j$ with $a > b$ in which case $anb$ is a $231$ pattern.
\end{proof}

We are now ready to prove our functional equation relating the generating functions $P(x), Q(x)$ and $R(x)$, where 
 \[P(x) = \sum_{n=0}^\infty p_n(231) x^n, \; Q(x) = \sum_{n=1}^\infty q_n(231) x^n, \;\text{ and } \; R(x) = \sum_{n=1}^\infty r_n(231) x^n.\]

\begin{theorem} \label{thm: non nesting 231}
  The generating functions $P(x), Q(x),$ and $R(x)$ satisfy the functional equation:
    \begin{equation} \label{eq: PRFRecursion}
    P(x) = 2xR(x)Q(x) + xP(x)Q(x) + xR(x) P(x) + xP(x)^2 + 1.
    \end{equation}
\end{theorem}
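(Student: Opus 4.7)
The plan is to decompose any $\pi\in\mathcal{P}_n(231)$ with $n\ge 1$ according to the structure around its two occurrences of $n$. Writing $\pi = A\cdot n\cdot B\cdot n\cdot C$, where $A$, $B$, $C$ are the substrings before the first $n$, between the two $n$'s, and after the second $n$, respectively, I will enumerate by cases on $B$; the ``$+1$'' in the functional equation accounts for the empty permutation when $n=0$. Because $\pi$ is non-nesting, no arc can nest strictly inside the $n$-arc: in particular, there is no arc with both endpoints in $B$, and there is no ``through'' arc with first occurrence in $A$ and second in $C$ (such an arc together with the $n$-arc would form a $1221$ pattern). Hence every element of $B$ is either the second endpoint of a \emph{terminating} arc (with first end in $A$) or the first endpoint of an \emph{originating} arc (with second end in $C$), and Lemma~\ref{lem: nbetween} gives $|B|\le 2$.

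I then handle four cases. If $B=\emptyset$, a $231$-argument using the first $n$ as the ``$3$'' shows every value in $A$ is strictly smaller than every value in $C$, so $A$ and $C$ are independent non-nesting $231$-avoiding permutations on $\{1,\ldots,k\}$ and $\{k+1,\ldots,n-1\}$ for some $k$, contributing $xP(x)^2$. If $B$ consists of a single terminating arc of value $a$, then Lemma~\ref{lem: maxminBetween} gives $a=\max A$; the same $231$-argument forces $A$ to use the values $\{1,\ldots,a\}$ (with $a$ appearing once) and $C$ to use $\{a+1,\ldots,n-1\}$. Appending an extra copy of $a$ to the end of $A$ produces an element of $\mathcal{P}_a(231)$ ending with $a$, counted by $r_a$, and $C$ contributes $p_{n-1-a}$, for a total of $xR(x)P(x)$. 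The symmetric case of a single originating arc of value $b$ contributes $xP(x)Q(x)$, with $b$ prepended to $C$ to yield a $Q$-type permutation on the relabeled values. Finally, if $B$ contains both a terminating arc $a$ and an originating arc $b$, a direct $231$-check on the four combinations (crossing vs.\ separate arcs in $B$, $a<b$ vs.\ $a>b$) forces $a<b$; the constraints that every element of $A$ is at most $a$ and every element of $C$ is at least $b$ then force $b=a+1$, since otherwise the values $\{a+1,\ldots,b-1\}$ would have no valid placement. Both orderings $(a,b)$ and $(b,a)$ of $B$ yield valid $\pi$, giving $2xR(x)Q(x)$.

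The main obstacle will be verifying the bijective correspondences in the nontrivial cases: that appending $a$ to $A$ (respectively, prepending $b$ to $C$) always yields a valid element of $\mathcal{P}_a(231)$ ending with its maximum (respectively, a valid $Q$-type element after relabeling), and conversely that each such $R$- or $Q$-type permutation arises from a unique $\pi$. This reduces to checking that the extra copy of $a$ (or $b$) creates neither a nesting nor a $231$ pattern, which should follow because $a=\max A$ prevents $a$ from serving as the ``$3$'' in any new $231$, and because any other arc in $A$ originally had its first occurrence strictly before that of $a$ (otherwise it would have nested inside the original $a$-arc terminating in $B$). In the fourth case I must also confirm that both orderings of $B$ yield $231$-avoiding sequences, which falls out of the crossing-versus-separate sub-case analysis that produces the constraint $a<b$.
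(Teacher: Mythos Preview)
Your proposal is correct and follows essentially the same approach as the paper: decomposing $\pi$ around the $n$-arc, invoking Lemmas~\ref{lem: nbetween} and~\ref{lem: maxminBetween} to bound and constrain $B$, and splitting into the same four cases to obtain the four summands. You are in fact somewhat more explicit than the paper in two places---you spell out why $b=a+1$ in the two-arc case (the paper leaves this implicit in its statement that all $a_i$ are less than all $b_j$), and you flag the need to verify that appending $a$ to $A$ (resp.\ prepending $b$ to $C$) really gives a bijection with the $R$-type (resp.\ $Q$-type) permutations, which the paper takes for granted.
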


\begin{proof}
We will prove that for $n \geq 3$, 
    \begin{align*}
    p_n(231) &= 2\sum_{k=1}^{n-2} r_k(231) q_{n-k-1}(231) + \sum_{k=0}^{n-2} p_k(231) q_{n-k-1}(231) \\
    &+ \sum_{k=1}^{n-1} r_k(231) p_{n-k-1}(231)+ \sum_{k=0}^{n-1} p_k(231) p_{n-k-1}(231)
    \end{align*}
    with $p_0(231) = 1$, $p_1(231) = 1$, and $p_2(231) = 4$. This will immediately imply the desired functional equation. 
    
    Given a permutation $\pi \in \mathcal{P}_n(231)$, each summand comes from considering the two possible positions of $n$. There are four options determined by whether an arc originates or terminates inside the $n$-arc, since by Lemma \ref{lem: nbetween} there is at most one arc that terminates and at most one that originates inside the $n$-arc.

First consider the case where there is both a terminal and originating arc inside the $n$-arc. So that $\pi = a_1\cdots a_{2k-1} n a_{2k} b_{1}nb_2\cdots b_{2(n-k-1)}$ or $\pi = a_1\cdots a_{2k-1} n b_{1}a_{2k}nb_2\cdots b_{2(n-k-1)}$, where $a_{2k}$ is a terminal entry and $b_1$ is an initial entry. Throughout this proof, the $a_1\cdots a_{2k}$ are all necessarily less than $b_1\cdots b_{2(n-k-1)}$ so that $\pi$ avoids $231$. 



We must have that $a_{2k}$ is maximal among all the $a_i$ by Lemma \ref{lem: maxminBetween} since it terminates inside the $n$-arc. Similarly, $b_1$ must be minimal among all the $b_i$. As a result, there are $r_k$ choices for $a_1\cdots a_{2k}$ and $q_{n-k-1}$ choices for $b_1\cdots b_{2(n-k-1)}$. The two arcs under the $n$-arc could also cross meaning $a_{2k}$ appears after $b_1$, or $a_{2k}$ could appear before $b_1$, leaving us with $2\sum_{k=1}^{n-2} r_k(231) q_{n-k-1}(231)$ such $231$ avoiding non-nested permutations.

We next consider the case $\pi = a_1\cdots a_{2k} n b_1 n b_2 \cdots b_{2(n-k-1)}$, where there is only an arc originating between the $n$-arc, but not one terminating there. In this case $a_1\cdots a_{2k}$ can be any $231$ avoiding non-crossing permutation, including the empty permutation, but $b_1\cdots b_{2(n-k-1)}$ must again have $b_1$ minimal among the $b_i$'s by Lemma \ref{lem: maxminBetween}. We also must have $n-k-1 \geq 1$ because we must have an entry, $b_1$, between the $n$ entries. This leaves us with $\sum_{k=0}^{n-2} p_k(231) q_{n-k-1}(231)$ such permutations.

Similarly, in the case that $\pi = a_1 \cdots a_{2k-1} n a_{2k} n b_1 \cdots b_{2(n-k-1)}$, $b_1\cdots b_{2(n-k-1)}$ can be any $231$ avoiding non-crossing permutation, including the empty permutation, but $a_{2k}$ must be maximal among the $a_i$'s and $k \geq 1$. This leaves us with $\sum_{k=1}^{n-1} r_k(231) p_{n-k-1}(231)$ such permutations.

\sloppy Finally, consider the case where $\pi = a_1 \cdots a_{2k-1} a_{2k} n n b_1 \cdots b_{2(n-k-1)}$;  there are no arcs originating or terminating between the $n$-arc. Here $a_1\cdots a_{2k}$ and $b_1 \cdots b_{2(n-k-1)}$ can be any $231$ avoiding non-crossing permutations, including the empty permutation, leaving us with $\sum_{k=0}^{n-1} p_k(231) p_{n-k-1}(231)$ options.


This exhausts all possibilities, summing yields the stated recursion which gives the functional equation among the corresponding generating functions.
\end{proof}

\begin{example}
    Consider the example $\pi = 121632653454$ with the following matching diagram:
    \begin{center}
    \begin{tikzpicture}
       \draw[line width=.5mm](0,0) to[bend left=45] (2,0);
       \draw[line width=.5mm](1,0) to[bend left=45] (5,0);
       \draw[line width=.5mm](3,0) to[bend left=45] (6,0);
       \draw[line width=.5mm](4,0) to[bend left=45] (8,0);
       \draw[line width=.5mm](7,0) to[bend left=45] (10,0);
       \draw[line width=.5mm](9,0) to[bend left=45] (11,0);
               \draw(-0.5,0) -- ++ (12,0);
    \draw[circle,fill] (0,0) circle[radius=1mm]node[below]{$1$};
    \draw[circle,fill] (1,0) circle[radius=1mm]node[below]{$2$};
    \draw[circle,fill] (2,0) circle[radius=1mm]node[below]{$1$};
    \draw[circle,fill] (3,0) circle[radius=1mm]node[below]{$6$};
    \draw[circle,fill] (4,0) circle[radius=1mm]node[below]{$3$};
    \draw[circle,fill] (5,0) circle[radius=1mm]node[below]{$2$};
    \draw[circle,fill] (6,0) circle[radius=1mm]node[below]{$6$};
    \draw[circle,fill] (7,0) circle[radius=1mm]node[below]{$5$};
    \draw[circle,fill] (8,0) circle[radius=1mm]node[below]{$3$};
    \draw[circle,fill] (9,0) circle[radius=1mm]node[below]{$4$};
    \draw[circle,fill] (10,0) circle[radius=1mm]node[below]{$5$};
    \draw[circle,fill] (11,0) circle[radius=1mm]node[below]{$4$};
    \end{tikzpicture}
\end{center}

    This falls into the case where there are entries to the left and right of the largest entry, $6$. There are also arcs originating and terminating between the two occurrences of $6$. So this permutation decomposes into a permutation with two arcs ending in a $2$: $1212$, indicated in red, and a permutation with three arcs beginning with a $3$: $353454$, indicated in blue.
    \begin{center}
    \begin{tikzpicture}
       \draw[line width=.5mm, color = red](0,0) to[bend left=45] (2,0);
       \draw[line width=.5mm, color = red](1,0) to[bend left=45] (5,0);
       \draw[line width=.5mm](3,0) to[bend left=45] (6,0);
       \draw[line width=.5mm, color = blue](4,0) to[bend left=45] (8,0);
       \draw[line width=.5mm, color = blue](7,0) to[bend left=45] (10,0);
       \draw[line width=.5mm, color = blue](9,0) to[bend left=45] (11,0);
               \draw(-0.5,0) -- ++ (12,0);
    \draw[circle,fill] (0,0) circle[radius=1mm]node[below]{$1$};
    \draw[circle,fill] (1,0) circle[radius=1mm]node[below]{$2$};
    \draw[circle,fill] (2,0) circle[radius=1mm]node[below]{$1$};
    \draw[circle,fill] (3,0) circle[radius=1mm]node[below]{$6$};
    \draw[circle,fill] (4,0) circle[radius=1mm]node[below]{$3$};
    \draw[circle,fill] (5,0) circle[radius=1mm]node[below]{$2$};
    \draw[circle,fill] (6,0) circle[radius=1mm]node[below]{$6$};
    \draw[circle,fill] (7,0) circle[radius=1mm]node[below]{$5$};
    \draw[circle,fill] (8,0) circle[radius=1mm]node[below]{$3$};
    \draw[circle,fill] (9,0) circle[radius=1mm]node[below]{$4$};
    \draw[circle,fill] (10,0) circle[radius=1mm]node[below]{$5$};
    \draw[circle,fill] (11,0) circle[radius=1mm]node[below]{$4$};
    \end{tikzpicture}
\end{center}

This is one of the permutations enumerated by $r_2q_3$. Notice, it would not create a $231$ to swap the entries between the two occurrences of $6$, as seen below.
\begin{center}
    \begin{tikzpicture}
       \draw[line width=.5mm, color = red](0,0) to[bend left=45] (2,0);
       \draw[line width=.5mm, color = red](1,0) to[bend left=45] (4,0);
       \draw[line width=.5mm](3,0) to[bend left=45] (6,0);
       \draw[line width=.5mm, color = blue](5,0) to[bend left=45] (8,0);
       \draw[line width=.5mm, color = blue](7,0) to[bend left=45] (10,0);
       \draw[line width=.5mm, color = blue](9,0) to[bend left=45] (11,0);
               \draw(-0.5,0) -- ++ (12,0);
    \draw[circle,fill] (0,0) circle[radius=1mm]node[below]{$1$};
    \draw[circle,fill] (1,0) circle[radius=1mm]node[below]{$2$};
    \draw[circle,fill] (2,0) circle[radius=1mm]node[below]{$1$};
    \draw[circle,fill] (3,0) circle[radius=1mm]node[below]{$6$};
    \draw[circle,fill] (4,0) circle[radius=1mm]node[below]{$2$};
    \draw[circle,fill] (5,0) circle[radius=1mm]node[below]{$3$};
    \draw[circle,fill] (6,0) circle[radius=1mm]node[below]{$6$};
    \draw[circle,fill] (7,0) circle[radius=1mm]node[below]{$5$};
    \draw[circle,fill] (8,0) circle[radius=1mm]node[below]{$3$};
    \draw[circle,fill] (9,0) circle[radius=1mm]node[below]{$4$};
    \draw[circle,fill] (10,0) circle[radius=1mm]node[below]{$5$};
    \draw[circle,fill] (11,0) circle[radius=1mm]node[below]{$4$};
    \end{tikzpicture}
\end{center}

This also yields a valid 231-avoiding non-nesting permutation, which is why we must  have a factor of 2 in the first summand of Theorem \ref{thm: non nesting 231}.
\end{example}

We now need a better understanding of $R(x)$ and $Q(x)$ to get our implicit equation for $P(x)$.

\begin{lemma} \label{lem: RinTermsOfP}
    For $n \geq 2$,
    $
    r_n = p_{n-1} + r_{n-1}
    $
    and $r_1 = 1$. This implies
    \[
    R(x) = \frac{xP(x)}{1-x}.
    \]
    
\end{lemma}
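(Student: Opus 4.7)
The plan is to prove the recursion $r_n = p_{n-1} + r_{n-1}$ for $n \geq 2$ by a case analysis on the position $i$ of the first occurrence of $n$ in a permutation $\pi \in \mathcal{P}_n(231)$ satisfying $\pi_{2n} = n$; the generating function formula will then drop out by a short algebraic manipulation, after noting that $r_1 = 1$ is immediate (the only non-nesting permutation of $\{1,1\}$ is $11$).

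The first step is to pin down the possible values of $i$. Because $\pi_{2n} = n$ sits at the very end, any arc originating strictly between positions $i$ and $2n$ would have to terminate at a position also strictly between them (there are no positions past $2n$), which would nest inside the $n$-arc and contradict non-nesting. Hence every position strictly between $i$ and $2n$ holds the terminus of an arc that began before $i$, and by Lemma~\ref{lem: nbetween} there is at most one such position. Therefore $2n - i - 1 \leq 1$, i.e., $i \in \{2n-2, 2n-1\}$.

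I would then treat each case. When $i = 2n-1$, the permutation takes the form $\pi = \sigma\, n\, n$ with $\sigma \in \mathcal{P}_{n-1}(231)$: the two trailing $n$'s cannot enter any $231$ pattern (since $n$ is maximal and would have to occupy the middle ``$3$'' position, forcing a later entry to be strictly smaller, but the only later entry is the other $n$) nor any nesting (the $n$-arc has no interior, and no position lies to its right). So this case contributes $p_{n-1}$. When $i = 2n-2$, exactly one arc terminates at position $2n-1$, and by Lemma~\ref{lem: maxminBetween} its value must be the maximum of the values in positions $1,\ldots, 2n-3$, which forces it to equal $n-1$. Thus $\pi = \pi_1 \cdots \pi_{2n-3}\, n\, (n-1)\, n$, and deleting the two $n$'s sends $\pi$ to $\pi' := \pi_1 \cdots \pi_{2n-3}\,(n-1)$, which is a non-nesting $231$-avoiding permutation of $\{1,1,\ldots,n-1,n-1\}$ ending in $n-1$. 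The inverse reinserts $n$ at positions $2n-2$ and $2n$; the new $n$-arc has a single interior position, so nothing can nest inside it, no arc can contain it (there are no positions to its right), and no new $231$ is introduced, since $n$ can only play the ``$3$'' (maximum) role in a $231$ and a short case check on the three positions $2n-2, 2n-1, 2n$ rules out every such configuration. This yields a bijection with the $r_{n-1}$-enumerated permutations.

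Summing the two cases gives $r_n = p_{n-1} + r_{n-1}$ for $n \geq 2$; multiplying by $x^n$ and summing from $n \geq 2$ yields $R(x) - x = x(P(x) - 1) + xR(x)$, which rearranges to $(1-x)R(x) = xP(x)$ and hence to the claimed formula. The main technical point is the verification in the case $i = 2n-2$ that reinsertion of the two $n$'s creates neither a new nesting nor a new $231$; both checks are short, but they depend essentially on Lemma~\ref{lem: maxminBetween} to identify the arc ending at position $2n-1$ as uniquely the $(n-1)$-arc.
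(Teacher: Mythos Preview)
Your proof is correct and follows essentially the same approach as the paper: both argue via Lemmas~\ref{lem: nbetween} and~\ref{lem: maxminBetween} that the first $n$ must sit at position $2n-1$ or $2n-2$, giving the two summands $p_{n-1}$ and $r_{n-1}$, and then derive the generating function identity. Your write-up is in fact more careful than the paper's in explicitly verifying that the map in the $i=2n-2$ case is a bijection (checking that reinserting the two $n$'s creates no new nesting or $231$), a point the paper leaves implicit.
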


\begin{proof}
    Given $\pi \in \mathcal{P}_n(231)$  with $\pi_{2n}=n$, by Lemma \ref{lem: nbetween} there is at most one arc that terminates between the two occurrences of $n$, and no arc can originate because $\pi_{2n} = n$ and $\pi$ is non-nesting.

    This means that either $\pi_{2n-1} = n$ or $\pi_{2n-2} = n$. In the first case, we clearly have $p_{n-1}$ options because the remaining permutation is $231$ avoiding and non-nesting. In the second case, we have $r_{n-1}$ options by Lemma \ref{lem: maxminBetween} because $\pi_{2n-1}$ must be the maximal element to the left of position $2n-2$, forcing $\pi_{2n-1} = n-1$, meaning $\pi_1\dots\pi_{2n-3}\pi_{2n-1} \in \mathcal{P}_{n-1}(231)$ with last entry equal to $n-1$.

    The result about the generating functions follows immediately from the recursion since $R(x) = xP(x) + xR(x)$.
\end{proof}




Now, let $r'_n$ denote the $231$-avoiding non-nested permutations with both $\pi_1 = 1$ and $\pi_{2n} = n$ and denote its corresponding generating function by 
\[R'(x) = \sum_{n=1}^\infty r_n' x^n.\]

\begin{lemma} \label{lem: R'inTermsOfF}
    For $n \geq 2$,
    $
    r_n' = q_{n-1} + r_{n-1}'
    $
    and $r_1' = 1$. Thus, we have
    \[
    R'(x) = \frac{x+xQ(x)}{1-x}
    \]
\end{lemma}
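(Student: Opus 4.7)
The plan is to run essentially the same argument used for Lemma~\ref{lem: RinTermsOfP}, but to track the additional constraint $\pi_1 = 1$. Given $\pi \in \mathcal{P}_n(231)$ with $\pi_1 = 1$ and $\pi_{2n} = n$, Lemma~\ref{lem: nbetween} says at most one arc terminates strictly between the two copies of $n$, while no arc can originate between them since the second $n$ occupies the final position $2n$. So only two cases arise: either $\pi_{2n-1} = n$, or $\pi_{2n-2} = n$ with a single arc terminating at position $2n-1$.

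In the first case, deleting the two terminal $n$'s leaves a $231$-avoiding non-nesting permutation of $\{1,1,\dots,n-1,n-1\}$ that still starts with $1$, contributing $q_{n-1}$ options. In the second case, Lemma~\ref{lem: maxminBetween} forces $\pi_{2n-1}$ to be the maximum among entries to the left of position $2n-2$, which must be $n-1$; deleting both $n$'s then yields a permutation in the family enumerated by $r_{n-1}'$, namely one of length $2(n-1)$ starting with $1$ and ending with $n-1$. Combining with the base case $r_1' = 1$ gives the recursion $r_n' = q_{n-1} + r_{n-1}'$ for $n \geq 2$, and translating to generating functions yields $R'(x) = x + xQ(x) + xR'(x)$, from which the claimed closed form follows by solving for $R'(x)$.

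I do not anticipate a serious obstacle here. The main point to verify carefully is that in Case~2 the deletion of both copies of $n$ produces a valid element of the family counted by $r'_{n-1}$: removing entries from a $231$-avoiding non-nesting permutation cannot introduce either a $231$ pattern or a nesting, and since $n \geq 2$ the entry in position $1$ is untouched, so the starting and ending conditions are preserved.
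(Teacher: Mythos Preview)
Your proof is correct and follows essentially the same approach as the paper's own argument: both mimic the proof of Lemma~\ref{lem: RinTermsOfP} while tracking the extra constraint $\pi_1=1$, split into the two cases $\pi_{2n-1}=n$ and $\pi_{2n-2}=n$, invoke Lemma~\ref{lem: maxminBetween} to force $\pi_{2n-1}=n-1$ in the second case, and then translate the recursion into $R'(x)=x+xQ(x)+xR'(x)$.
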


\begin{proof}
The argument here is identical to the argument in Lemma \ref{lem: RinTermsOfP}, but now we always force $\pi_1 = 1$. So if $\pi$ is a $231$-avoiding non-nesting permutation with $\pi_1 = 1$ and $\pi_{2n} = n$ we could have $\pi_{2n-1} = n$ or $\pi_{2n-2} = n$. In the first case, we get $q_{n-1}$ permutations because the remaining entries form a 213-avoiding non-nesting permutation that begins with $1$. In the second case, we get $r'_{n-1}$ permutations because the remaining elements form and 231-avoiding non-nesting permutation that begins with $1$ and ends with $n-1$ by Lemma \ref{lem: maxminBetween}. We note that we consider $r_1' = 1$ since $\pi = 11$ both begins and ends with the largest element. This shows $R'(x) = xQ(x) + xR'(x) + x$, we need the additional $x$ term because neither $Q(x)$ and $R'(x)$ have a constant term. The statement about generating functions follows immediately.
\end{proof}

Next, let us focus on those permutations with $\pi_1 = 1$. 

\begin{lemma} \label{lem: FR'P}
    The generating function $Q(x)$ satisfies the following functional equation:
    \[
    Q(x) = 2xR'(x)Q(x) + xQ(x)^2 + xR'(x)P(x) + xQ(x)P(x) + x.
    \]
\end{lemma}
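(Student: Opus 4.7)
The plan is to mirror the case analysis used in the proof of Theorem~\ref{thm: non nesting 231}, now restricted to permutations with $\pi_1 = 1$. Fix $\pi \in \mathcal{P}_n(231)$ with $\pi_1 = 1$ and $n \geq 2$, and let $i < j$ be the two positions where $n$ appears. By Lemma~\ref{lem: nbetween}, at most one arc terminates and at most one arc originates between positions $i$ and $j$, giving the same four cases as before: (A) both, (B) only an originating arc, (C) only a terminating arc, (D) neither. The only new wrinkle is tracking that the leftmost block must begin with $1$.

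In Case (A), write $\pi = \alpha\, n\, [\text{middle}]\, n\, \beta$, where the middle contains a terminating entry $a_{2k}$ (the last entry of an arc starting before position $i$) and an originating entry $b_1$ (the first entry of an arc ending after position $j$). By Lemma~\ref{lem: maxminBetween}, appending $a_{2k}$ to $\alpha$ produces a $231$-avoiding non-nesting permutation that begins with $1$ and ends with its maximum, so this left piece is counted by $r'_k$; similarly, prepending $b_1$ to $\beta$ produces a permutation beginning with its minimum, counted by $q_{n-k-1}$. The two valid relative orders of $a_{2k}$ and $b_1$ inside the middle give the factor of $2$, contributing $2xR'(x)Q(x)$. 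In Case (C), an analogous decomposition shows the left block (through $a_{2k}$) is counted by $r'_k$ while the right block is unconstrained and counted by $p_{n-k-1}$, contributing $xR'(x)P(x)$.

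In Cases (B) and (D), no terminating arc is present, so the left block is not forced to end with its max; it is simply a $231$-avoiding non-nesting permutation starting with $1$, counted by $q_k$ (automatically $k \geq 1$ since $\pi_1 = 1 \neq n$). In Case (B), the right block still begins with its minimum by Lemma~\ref{lem: maxminBetween}, giving $xQ(x)^2$; in Case (D), the right block is unconstrained, giving $xQ(x)P(x)$. Summing the four contributions covers all $\pi \in \mathcal{P}_n(231)$ with $\pi_1 = 1$ for $n \geq 2$, and the unique $n=1$ permutation $\pi = 11$ (which has no positions between the two occurrences of $n$) accounts for the extra $+x$ term. Using $q_0 = r'_0 = 0$ so that the lower summation indices take care of themselves, passing to generating functions yields the stated functional equation.

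The main obstacle is simply bookkeeping: for each of the four cases one must verify which of the three families $P, Q, R'$ counts the left and right sub-permutations. The key observation is that Lemma~\ref{lem: maxminBetween} controls whether the right block must begin with its minimum and whether the left block (augmented by the terminating entry, if present) must end with its maximum, while the condition $\pi_1 = 1$ promotes any ``plain'' $P$-factor on the left to a $Q$- or $R'$-factor. Once these identifications are fixed, the recursion matches Theorem~\ref{thm: non nesting 231} with the left $P$ replaced by $Q$ (or $R'$ where a terminating arc forces the max), and the translation to generating functions is immediate.
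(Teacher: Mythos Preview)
Your proposal is correct and follows exactly the approach the paper intends: the paper's own proof is the single sentence ``The argument is identical to that from Theorem~\ref{thm: non nesting 231}, replacing all the first terms in each recursion with their corresponding term that begins with $1$,'' and your case analysis is precisely that replacement carried out in detail. The bookkeeping you flag (that $\pi_1=1\neq n$ forces $k\ge1$ for the left block in Cases~(B) and~(D), and that $\pi=11$ supplies the lone $+x$) is the only subtlety, and you handle it correctly.
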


\begin{proof}
    The argument is identical to that  from Theorem \ref{thm: non nesting 231}, replacing all the first terms in each recursion with their corresponding term that begins with $1$.
\end{proof}

We are now ready to prove the main result of this section. 

\begin{theorem} \label{thm: ImplicitP}
    The generating function
    \[
    P(x) = \sum_{n\geq0} p_n(231) x^n
    \]
    is given implicitly by
    \[
    x^3P(x)^3 - (x^3+3x^2+x)P(x)^2+(2x^2-x+1)P(x) + x-1 = 0.
    \]
\end{theorem}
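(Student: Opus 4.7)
The plan is to combine the functional equations from Theorem~\ref{thm: non nesting 231} and Lemmas~\ref{lem: RinTermsOfP}, \ref{lem: R'inTermsOfF}, and \ref{lem: FR'P} in a resultant-style elimination that removes $Q$, $R$, and $R'$, leaving a single polynomial identity in $P$ and $x$.

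First I would substitute $R = xP/(1-x)$ from Lemma~\ref{lem: RinTermsOfP} into the recurrence of Theorem~\ref{thm: non nesting 231} and multiply through by $(1-x)$. After the expected cancellations among the $x^2PQ$ and $x^2P^2$ terms, this collapses to the clean relation
\[
(1-x)P \;=\; x(1+x)PQ + xP^2 + (1-x),
\]
which is linear in $Q$ and can be solved to express $Q$ as an explicit rational function of $P$ and $x$, namely $Q = \bigl((1-x)(P-1) - xP^2\bigr)/\bigl(x(1+x)P\bigr)$.

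Next I would substitute $R' = x(1+Q)/(1-x)$ from Lemma~\ref{lem: R'inTermsOfF} into the functional equation of Lemma~\ref{lem: FR'P} and again clear the $(1-x)$ denominator. Collecting terms yields a quadratic in $Q$ with polynomial coefficients in $P$ and $x$, of the form
\[
Q\bigl(1 - x - 2x^2 - xP\bigr) \;=\; x(1+x)Q^2 + x^2 P + x(1-x).
\]

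The key step is then to plug the rational expression for $Q$ from the first step into this quadratic and clear the denominator $\bigl(x(1+x)P\bigr)^{2}$. The result is a polynomial identity in $P$ and $x$ whose degree in $P$ is a priori four. I expect that every coefficient (viewed as a polynomial in $x$) is divisible by $(1-x)$, so the relation factors as $(1-x)$ times the desired cubic; dividing out this common factor reduces the leading $P^3$ coefficient from $x^3(1-x)$ to $x^3$ and the constant term from $-(1-x)^2$ to $x-1$, recovering exactly the stated equation. The main obstacle is purely computational: the substitution generates many cross-terms, and one must verify carefully that $(1-x)$ does divide the full relation and then perform the division. Once Steps 1 and 2 have been reduced to the clean forms above, the remaining work is straightforward polynomial bookkeeping.
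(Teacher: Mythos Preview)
Your approach is correct and essentially the same as the paper's: both substitute Lemmas~\ref{lem: RinTermsOfP} and~\ref{lem: R'inTermsOfF} into Theorem~\ref{thm: non nesting 231} and Lemma~\ref{lem: FR'P} respectively, then eliminate $Q$ between the two resulting relations (the paper solves the second relation for $P$ in terms of $Q$ before substituting, while you substitute the rational expression for $Q$ directly into the quadratic, but these are algebraically equivalent). One small bookkeeping correction: if you clear by the full $\bigl(x(1+x)P\bigr)^2$ as you state, the $P^4$ terms cancel and the extraneous factor is $x(1+x)(1-x)$, not just $(1-x)$; equivalently, you can clear by $x(1+x)P^2$ instead (since the $x(1+x)Q^2$ term absorbs one factor of $x(1+x)$), in which case the leading $P^3$ coefficient really is $x^3(1-x)$ and your stated factorization goes through verbatim.
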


\begin{proof}
Substituting $R'(x) = \frac{x+xQ(x)}{1-x}$ from Lemma \ref{lem: R'inTermsOfF} into the identity in Lemma \ref{lem: FR'P} and solving for $P(x)$, we get: 
 \[
    P(x) = \frac{-x + x^2 + Q(x) - xQ(x) - 2x^2Q(x)- xQ(x)^2-x^2Q(x)^2}{x(x+Q(x))}
    \]
Substituting  $R(x) = \frac{xP(x)}{1-x}$ from Lemma \ref{lem: RinTermsOfP} into \eqref{eq: PRFRecursion} and solving for $Q(x)$, we obtain: 
    \[
    Q(x) = \frac{-1 + x + P(x) - xP(x) - xP(x)^2}{x(1+x)P(x)}
    \]
Finally, one can substitute this equation for $Q(x)$ into the equation for $P(x)$, and
after simplifying, one obtains the desired implicit equation. 
\end{proof}

We can use the functional equation in Theorem \ref{thm: ImplicitP} to compute:
\[
P(x) = 1 + x+4x^2+17x^3+77x^4+367x^5+1815x^6+9233x^7+48014x^8+254123x^9+1364491x^{10}+\cdots
\]
with more coefficients found at \cite[A383770]{OEIS}

\begin{remark}
If you solve for $P(x)$ explicitly, the following radical appears:
\[
\sqrt{-x^8+4x^9-2x^{10}+92x^{11}+47x^{12}-140x^{13}-76x^{14}+16x^{15}-8x^{16}}.
\]
The polynomial under the radical has a minimal root of $0.161809$, giving a growth rate of $\frac{1}{0.161809} \approx 6.1801$ for $p_n(231)$. For example, we note that $p_{250}/p_{249}  \approx 6.143$. The largest growth rate for non-nesting permutations avoiding two patterns in $\mathcal{S}_3$ was $3$ \cite{EL25}.
\end{remark}

\section{Non-crossing permutations avoiding 231} \label{sec:non-crossing-231}

Let $\bar{p}_n=\bar{p}_n(231)$ be the number that avoid 213 and let $\bar{q}_n$ be the number that start with $\pi_1=1$. 

\begin{lemma}\label{lemma: p-bar}
For $n\geq 1$,
\[
\bar{p}_n = \sum_{k=2}^n\sum_{i=1}^{k-1}\bar{p}_{i}\bar{p}_{n-k}\bar{p}_{n-k-1-i} +  \sum_{k=1}^n\bar{p}_{k-1}\bar{q}_{n+1-k} 
\] where $\bar{p}_0=1$.
\end{lemma}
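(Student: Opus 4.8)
The plan is to adapt the maximum-position argument of Theorem~\ref{thm: non nesting 231} to the non-crossing setting, decomposing each $\pi\in\bar{\mathcal P}_n(231)$ by the two positions $i<j$ of the largest value $n$ (equivalently, by the $n$-arc of the associated matching). The first task is to extract the structural consequences of the non-crossing condition, which differ sharply from the non-nesting case: since no arc may cross the $n$-arc, every arc having an endpoint strictly between positions $i$ and $j$ must have both endpoints there. Thus the entries strictly between the two copies of $n$ form a self-contained block $M$, those before $i$ a block $L$, those after $j$ a block $Rr$, and the only arcs joining distinct blocks are those nesting entirely around the $n$-arc (running from $L$ to $Rr$). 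I would then prove non-crossing analogues of Lemmas~\ref{lem: nbetween} and~\ref{lem: maxminBetween}: using $231$-avoidance with $n$ in the role of the ``$3$'', the blocks are value-ordered, every entry of $L$ being at most every entry of $M$, and every entry of $M$ at most every entry of $Rr$; moreover an arc can nest around the $n$-arc only in the degenerate situation where $M$ is empty.

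This structure splits the count into the two displayed sums. In the case $M\neq\varnothing$ there is no nesting arc, so $L$, $M$, $Rr$ are three independent non-crossing $231$-avoiding permutations on the consecutive value-blocks dictated by the ordering above. Because that ordering automatically forbids any $231$ pattern using entries from two different blocks, the three blocks may be chosen freely; the three blocks together with the distinguished $n$-arc comprise all $n$ arcs, and these configurations are exactly those enumerated by the triple convolution forming the first sum. The only routine point here is the re-indexing that matches the composition $(|M|,|L|,|Rr|)$, with $|M|\ge 1$, to the summation variables $k$ and $i$.

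The delicate case, and the one I expect to be the main obstacle, is $M=\varnothing$: the two copies of $n$ are adjacent, so $\pi=L\cdot nn\cdot Rr$, possibly carrying one arc that nests around the pair $nn$. That nesting arc has one endpoint in $L$ and one in $Rr$, which is precisely what obstructs a naive prefix/suffix factorization into two independent blocks. My plan is to read the block running from the first copy of $n$ to the end as a non-crossing permutation beginning with its maximum, and then to invoke the complementation symmetry, which interchanges the patterns $231$ and $213$, fixes the non-crossing property, and converts ``begins with the maximum'' into ``begins with $1$''; this is exactly why it is convenient to pass to $213$-avoidance and why the quantity $\bar q$ (permutations beginning with $1$) enters the recursion. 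Carrying this through identifies the $M=\varnothing$ configurations with the convolution $\sum_{k}\bar p_{k-1}\bar q_{n+1-k}$, the $+1$ shift in $\bar q_{n+1-k}$ recording that the $n$-arc is absorbed into the ``begins with $1$'' factor. I would then pin down this bijection carefully, in particular the bookkeeping forced by the nesting arc, and finally confirm the assembled identity against the initial values $\bar p_0=1,\ \bar p_1=1,\ \bar p_2=4,\ \bar p_3=19$ before reading off the stated formula.
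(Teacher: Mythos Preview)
Your decomposition by the $n$-arc is genuinely different from the paper's, which pivots on the value $k=\pi_1$ of the \emph{first} entry (equivalently, on the arc starting at position~$1$). For non-crossing matchings the first arc is the natural pivot precisely because nothing can nest around it; the $n$-arc, by contrast, can have an arc nesting around it, and that is where your plan breaks. Your $M\neq\varnothing$ case is fine and does recover the triple convolution (though as a different set-theoretic partition than the paper's Case~2), but the $M=\varnothing$ case does not go through.

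There are two concrete obstructions. First, when an arc of value $a$ nests around $nn$, the block ``from the first $n$ to the end'' contains only one copy of $a$, so it is not a permutation of a double multiset and there is nothing to complement; the actual shape is $\alpha\,a\,\beta\,nn\,\gamma\,a\,\delta$ with \emph{four} independent closed sub-blocks, not two. Already for $n=3$ the four spanning-arc permutations $112332$, $211332$, $133122$, $133221$ realize all four placements of the nonempty block, so ``at most one nesting arc, hence $\bar p_a\bar p_b$'' undercounts. Second, and more seriously, complementation sends $231$-avoiders beginning with their maximum to $213$-avoiders beginning with~$1$, whereas the lemma's $\bar q_m$ counts \emph{$231$-avoiders} beginning with~$1$; these differ already at $m=3$, where $\bar q_3=7$ but there are $9$ non-crossing $231$-avoiders of semilength~$3$ beginning with~$3$. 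So even with perfect bookkeeping your route lands on the wrong auxiliary sequence and cannot yield the stated identity. The paper sidesteps both issues: since $\pi_1=k$ and $231$-avoidance force all values $<k$ to precede all values $>k$, when the second $k$ falls in the large-value region the suffix on $\{k,\dots,n\}$ is literally a $231$-avoiding non-crossing permutation beginning with its minimum, giving $\bar q_{n+1-k}$ directly with no symmetry needed.
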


\begin{proof}
Let us consider those permutations that start with $\pi_1=k$ for some $k$. Since $\pi$ avoids 231, all elements less than $k$ must appear in the permutation before all elements greater than $k$. Let us consider two cases. 

First,  suppose $\pi = k A B k C$ where every element of $A$ is less than $k$, which implies that the elements of $B$ and $C$ are exactly those that are greater than $k$. Notice that $A$ can be any non-crossing permutation of semi-length $k-1$ and $kBkC$ is any non-crossing permutation of semi-length $n-k$ that starts with its smallest element. Thus there are a total of $\bar{p}_{k-1}\bar{q}_{n-k}$ non-crossing 231-avoiding permutations of this form for any $1\leq k \leq n$.

Next, suppose $\pi = k A k B C$ where every element of $C$ is greater than $k$, which implies that the elements of $A$ and $B$ are exactly composed of those elements that are less than $k$. Furthermore, suppose $B$ is nonempty so there is no overlap with the previous case. Since $\pi$ avoids 231, it must be that the elements of $B$ are greater than those of $A$. Note also that $A$, $B$, and $C$ are necessarily equivalent to non-crossing permutations themselves. Letting $i$ be the semilength of $B$, then we must have that there are \[\sum_{i=1}^{k-1}\bar{p}_{n-k-i-1}\bar{p}_i\bar{p}_{n-k}\] permutations of this form.
\end{proof}

\begin{lemma}\label{lemma: q-bar}
For $n\geq 1$,
\[
\bar{q}_k = \sum \bar{p}_{x_1-1}\bar{p}_{x_2-1}\cdots\bar{p}_{x_k-1}
\] where the sum is over all compositions $\boldsymbol{x}$ of $n$.
\end{lemma}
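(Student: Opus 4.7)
The plan is to give a bijective proof. Given $\pi \in \bar{\mathcal{P}}_n(231)$ with $\pi_1 = 1$, write $\pi = 1 A 1 B$, where $A$ is the (possibly empty) substring between the two occurrences of $1$ and $B$ is the suffix after the second $1$. I will show that $\pi$ is uniquely determined by the set of values appearing in $A$ together with a tuple of NC $231$-avoiding permutations filling the ``gaps'' between those values, and that this data is in bijection with the composition data on the right-hand side.

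First I would prove that $A$ is weakly decreasing: if $A$ contained positions $i < j$ with $\pi_i < \pi_j$, then together with the second $1$ (at the position where the $1$-arc closes) we would obtain a $231$ pattern $(\pi_i, \pi_j, 1)$. Hence $A = a_m a_m \, a_{m-1} a_{m-1} \cdots a_1 a_1$, where $a_1 < a_2 < \cdots < a_m$ are the distinct values in $A$, and $A$ is completely determined by the set $V_A := \{a_1, \ldots, a_m\} \subseteq \{2, \ldots, n\}$.

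Next, setting $V_B := \{2, \ldots, n\} \setminus V_A$, $a_0 := 1$, and $a_{m+1} := n+1$, I would show that $B$ decomposes uniquely as a concatenation $B = B_0 B_1 \cdots B_m$, where each $B_i$ is (up to relabeling) a NC $231$-avoiding permutation on the values $V_B \cap (a_i, a_{i+1})$. The key cross-constraint is: if $B$ contained some $b > a$ appearing before some $c < a$ for an $a \in V_A$, then $(a, b, c)$ would be a $231$ pattern with one entry from $A$ and two from $B$. Applying this for every $a \in V_A$ forces the values of $B$ to appear in increasing groups indexed by the intervals $(a_i, a_{i+1})$. Conversely, given any $V_A$ and any choice of NC $231$-avoiding $B_i$ on each interval, assembling $\pi = 1 A 1 B_0 B_1 \cdots B_m$ yields a NC $231$-avoiding permutation; the main subtlety here is ruling out $231$ patterns spanning two distinct blocks $B_i, B_j$, which works because every value in $B_j$ strictly exceeds every value in $B_i$ whenever $i < j$.

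Finally, reindexing gives the composition sum. Setting $x_j := a_j - a_{j-1}$ for $j = 1, \ldots, m+1$ yields a composition $(x_1, \ldots, x_{m+1})$ of $n$ (since $\sum x_j = a_{m+1} - a_0 = n$), and $|B_i| = x_{i+1} - 1$, so the contribution of $B_i$ is $\bar{p}_{x_{i+1}-1}$. The map $V_A \mapsto (x_1, \ldots, x_{m+1})$ is a bijection between subsets of $\{2, \ldots, n\}$ and compositions of $n$, so summing $\prod_j \bar{p}_{x_j - 1}$ over all $V_A$ yields the identity. The hardest step will be the sufficiency direction, in particular the case analysis ruling out $231$ patterns across different blocks $B_i$ and across the $1$-arc and the $B_i$'s.
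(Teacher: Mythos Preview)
Your proof is correct and follows essentially the same decomposition as the paper: both arguments write $\pi = 1A1B$, observe that $A$ is weakly decreasing (hence a sequence of adjacent doubled values $a_i$), and then show that the $231$-constraint forces $B$ to split into blocks whose values lie in the successive gaps between the $a_i$, with each block an arbitrary element of $\bar{\mathcal P}(231)$. Your write-up is somewhat more explicit than the paper's about the sufficiency direction and about the bijection with compositions via $x_j = a_j - a_{j-1}$, but the underlying idea is identical.
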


\begin{proof}
Suppose that $\pi_1=1$ and $\pi_i=1$. Since $\pi$ avoids 231, then we must have that $\pi_2\geq \pi_3\geq \cdots \geq \pi_{i-1}$. Furthermore, we can write $\pi$ as 
\[
\pi = 1a_1a_1a_2a_2\ldots a_{k-1}a_{k-1} 1 \beta_{k}\beta_{k-1}\ldots \beta_1
\] 
where for each $1\leq j \leq k$, all elements of $\beta_j$ are less than $a_{j-1}$ and greater than $a_{j}$. Note that the $\beta_j$ can be any 231-avoiding non-crossing permutation, and if $a_j=a_{j-1}-1$, then $\beta_j$ is empty. Since taking $x_i=|\beta_i|+1$ gives us a composition $\boldsymbol{x}$ of $n$, and the elements $a_i$ and those that appear in $\beta_i$ are forced by the inequalities present, the result follows. 
\end{proof}

\begin{example}
    Let us demonstrate Lemma~\ref{lemma: q-bar} with an example. Consider the permutation \[\pi = 11773312246655488.\] 
    In this case $a_1=7$ and $a_2=3,$ $\beta_3=22$, $\beta_2=466554$, and $\beta_1 = 88.$ Note that the associated composition is $2+4+2$ since the sizes of $\beta_3,\beta_2,$ and $\beta_1$ are 1, 3, and 1, respectively.
\end{example}

\begin{theorem}\label{theorem: non crossing 231}
The generating function given by \[\bar{P}(x) =\displaystyle\sum_{n\geq 0} \bar{p}_n(231) x^n\] is given implicitly by the equation:
\[
x^2\bar{P}(x)^4 - (x^2+x)\bar{P}(x)^3-x\bar{P}(x)^2+(x+1)\bar{P}(x)-1=0.
\]
\end{theorem}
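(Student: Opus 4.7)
The approach is to translate Lemmas~\ref{lemma: p-bar} and~\ref{lemma: q-bar} into functional equations for $\bar{P}(x)$ and $\bar{Q}(x)$, and then eliminate $\bar{Q}(x)$ algebraically. For Lemma~\ref{lemma: p-bar}, the summand $\sum_k \bar{p}_{k-1}\bar{q}_{n+1-k}$ is an ordinary convolution (with an index shift of $1$) and contributes $\bar{P}(x)\bar{Q}(x)$. The triple-sum summand, corresponding to the decomposition $\pi = kAkBC$, is a triple convolution with an extra factor of $x$ from the $k$-arc of semilength $1$; the constraint that $B$ be nonempty replaces one copy of $\bar{P}(x)$ by $\bar{P}(x)-1$, so the overall contribution is $x\bar{P}(x)^2(\bar{P}(x)-1)$. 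Separating off the initial condition $\bar{p}_0 = 1$, the recursion transforms into the identity
\[
\bar{P}(x) - 1 \;=\; \bar{P}(x)\bar{Q}(x) \,+\, x\bar{P}(x)^3 - x\bar{P}(x)^2.
\]

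Next I would convert Lemma~\ref{lemma: q-bar} by recognizing the sum over compositions of $n$ as a geometric series. Each part $x_i \geq 1$ contributes $\sum_{j \geq 1}\bar{p}_{j-1}x^j = x\bar{P}(x)$ to the generating function, and summing over the number of parts $k \geq 1$ yields
\[
\bar{Q}(x) \;=\; \sum_{k \geq 1}\bigl(x\bar{P}(x)\bigr)^k \;=\; \frac{x\bar{P}(x)}{1 - x\bar{P}(x)}.
\]

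Substituting this expression for $\bar{Q}(x)$ into the first identity and clearing the denominator $1 - x\bar{P}(x)$ produces a polynomial equation in $\bar{P}(x)$; after collecting by powers of $\bar{P}$ one recovers the stated quartic. No conceptual obstacle should arise, since both lemmas have already been proved combinatorially; the only care needed is careful index bookkeeping in the convolutions and correctly accounting for the constant terms $\bar{p}_0 = 1$ when converting sums over $n \geq 1$ to full generating functions. As a sanity check I would verify the first few coefficients of the resulting implicit solution match direct enumeration, e.g.\ $\bar{p}_0 = \bar{p}_1 = 1$ and $\bar{p}_2 = 4$.
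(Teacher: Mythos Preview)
Your proposal is correct and follows essentially the same approach as the paper: translate Lemma~\ref{lemma: p-bar} into $\bar{P}(x)=x\bar{P}(x)^2(\bar{P}(x)-1)+\bar{P}(x)\bar{Q}(x)+1$ and Lemma~\ref{lemma: q-bar} into $\bar{Q}(x)=\dfrac{x\bar{P}(x)}{1-x\bar{P}(x)}$, then substitute and clear denominators. Your identification of the convolution structures and the handling of the constant term $\bar p_0=1$ are both accurate.
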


\begin{proof}
    If we let $\bar{Q}(x) =\displaystyle\sum_{n\geq 0} \bar{q}_n x^n$, then Lemma~\ref{lemma: p-bar} tells us 
    \[
    \bar{P}(x) = x\bar{P}(x)\bar{P}(x)(\bar{P}(x)-1) + \bar{P}(x)\bar{Q}(x) + 1
    \]
    and Lemma~\ref{lemma: q-bar} tells us
    \[
    \bar{Q}(x) = \frac{x\bar{P}(x)}{1-x\bar{P}(x)}.
    \]
    Solving for $\bar{P}(x)$, we obtain the result.
\end{proof}

We can use the functional equation in Theorem \ref{theorem: non crossing 231} to compute
\[
\bar{P}(x) =1+x+4x^2+19x^3+102x^4+590x^5+ 3588x^6 + 22617x^7 + 146460x^8 + 968520x^9 +\cdots.
\]
with more coefficients found at \cite[A383771]{OEIS}.
\begin{remark}
Using Lagrange inversion, we can get the following formula for $\bar{p}_n(231)$: 
\[
\bar{q}_n(213) = \frac{1}{n} \sum_{j=0}^n\sum_{r=0}^{n-j}\sum_{m=0}^{n-1}\sum_{\ell=0}^m \frac{6^{m-\ell}}{2^n}\binom{n}{j}\binom{n-j}{r}\binom{m}{\ell}\binom{j/2}{m}\binom{2r+m+\ell+n}{j+r-m-\ell-1}.
\]
\end{remark}

 If you solve for $\bar{P}(x)$ explicitly, the following radical appears:
\[
\sqrt{-108x^3 + 621 x^4 + 432 x^5 +10206x^6 + 432 x^7 + 621 x^8 - 108 x^9},
\]
which has minimal root $x=.12791$, and $1/.12791 = 7.81774$, which gives us the growth rate for $\bar{p}_n(231)$. For example, $\bar{p}_{300}/\bar{p}_{299} = 7.77875$ and $\bar{p}_{600}/\bar{p}_{599} = 7.79822$. 
Recall that the growth rate for non-nesting permutations that avoid 213 is 6.1801, which implies that it is easier for a non-crossing permutation to avoid 231 than it is for a non-nesting permutation to avoid 231. 

\section{Non-crossing permutations avoiding 122} \label{sec: 122}

Note that in \cite{EL25}, non-nesting permutations avoiding patterns with repeated elements were considered, but this case was omitted for non-crossing permutations in \cite{AGPS}. Here, let us consider those non-crossing permutations that avoid the pattern $122$ (or equivalently, avoiding a single pattern in the set $\{112, 211, 221\}$), as well as those that avoid both 122 and another pattern $\sigma\in\S_3.$ We don't consider the pattern 212 or 121 since this is equivalent to Stirling permutations, whose pattern avoidance has already been well-studied.

\begin{theorem}\label{thm:112}
For $n\geq1$, 
\[
\bar{q}_n(122) = C_n
\] where $C_n$ is the $n$-th Catalan number.
\end{theorem}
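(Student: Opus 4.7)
The plan is to reduce the count to that of (unlabeled) non-crossing matchings on $[2n]$ by showing that $122$-avoidance rigidly determines the labels once the underlying matching is fixed. For each $i \in [n]$, let $\ell_i$ denote the position of the first (leftmost) occurrence of $i$ in $\pi$.

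The key step I would prove is that a permutation $\pi$ of $\{1,1,\ldots,n,n\}$ avoids $122$ if and only if $\ell_1 > \ell_2 > \cdots > \ell_n$, i.e., the first occurrences appear from left to right in strictly decreasing order of value (so in particular $\pi_1 = n$). For the forward direction, if a $122$ instance sits at positions $i < j < k$ with $\pi_i = a < b = \pi_j = \pi_k$, then $j$ must be the first occurrence of $b$ (since the other occurrence lies at $k > j$), giving $\ell_a \le i < j = \ell_b$ while $a < b$. Conversely, if $\ell_a < \ell_b$ for some $a < b$, then the positions $\ell_a$, $\ell_b$, and the second occurrence of $b$ carry the values $a, b, b$, which is a $122$-pattern.

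With this characterization in hand, the bijection is immediate. Given a non-crossing $122$-avoiding permutation $\pi$, forgetting the labels yields a non-crossing matching on $[2n]$; conversely, given any non-crossing matching on $[2n]$, labeling the $k$-th opening endpoint from the left with $n-k+1$ produces the unique labeling compatible with the first-occurrence condition and yields a valid non-crossing $122$-avoiding permutation. Since non-crossing matchings on $[2n]$ are enumerated by the Catalan number $C_n$, the result follows. The only substantive step is the first-occurrence reformulation of $122$-avoidance; the bijection is then essentially free, since the labels are rigidly determined by the opening order on any non-crossing matching, so I do not anticipate a genuine obstacle beyond carefully setting up that equivalence.
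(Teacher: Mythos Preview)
Your proof is correct and follows essentially the same approach as the paper: both arguments show that the labels of a $122$-avoiding non-crossing permutation are rigidly determined by labeling the opening endpoints of the underlying non-crossing matching from left to right in decreasing order, giving a bijection with non-crossing matchings on $[2n]$. Your explicit characterization via the first-occurrence positions $\ell_i$ is simply a more detailed unpacking of the paper's terse one-paragraph proof.
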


\begin{proof}
Start with any noncrossing matching and label left-to-right in decreasing way. This will clearly avoid 122 since any subsequence $baab$ or $bbaa$ must have that $a<b$ if we label according to this rule. If we label it any other way, then we must have a $a>b$ and thus a 122 pattern.
\end{proof}

\begin{theorem}
For $n\geq1$, 
\[
\bar{q}_n(122,\sigma) =\begin{cases} 
C_n & \sigma =132\\
F_{n+1} & \sigma=213 \\
2^{n-1} & \sigma\in\{231,123\} \\
n & \sigma=312
\end{cases}
\]
and $\bar{q}_n(122,321)=0$ for $n\geq 3.$
\end{theorem}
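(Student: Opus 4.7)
The plan is to leverage the characterization of $122$-avoiders established in Theorem~\ref{thm:112}: such a permutation is uniquely determined by its underlying non-crossing matching on $[2n]$ together with the labeling that assigns $n, n-1, \ldots, 1$ in order to the openers (left endpoints). If $O(p)$ denotes the opener of the arc through position $p$, then the inequality $\pi_i < \pi_j$ is equivalent to the opener $O(i)$ lying strictly to the right of $O(j)$. For each $\sigma \in \S_3$, I would convert ``$\pi$ contains $\sigma$'' into a statement about a configuration of three arcs in the matching, and then enumerate the matchings avoiding that configuration.

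For $\sigma = 321$ with $n \geq 3$, the first three opener positions already carry the labels $n, n-1, n-2$, so they form a $321$ pattern and the count is $0$. For $\sigma = 132$, a case analysis on whether each of $i, j, k$ is an opener or a closer (using the decreasing-opener rule together with the non-crossing constraint) rules out any $132$ occurrence; every $122$-avoider therefore also avoids $132$, and the count is the full $\bar{q}_n(122) = C_n$. For $\sigma = 123$, the witness $(o_C, c_B, c_A)$ gives a $123$ pattern whenever the matching has three arcs in triple nesting $C \subset B \subset A$, and the same case analysis shows these are the only occurrences; so the avoiders are non-crossing matchings of depth at most $2$, which biject with compositions of $n$ into positive parts (by recording the number of simple children of each root arc), giving $2^{n-1}$.

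For $\sigma = 231$, the witness $(o_B, c_A, o_C)$ shows that a $231$ appears whenever some arc $A$ strictly contains another arc $B$ while some arc $C$ lies entirely to the right of $A$. The avoidance condition forces a right-branching structure in which at every level of nesting only the rightmost child may itself contain arcs; letting $f(n)$ denote the count, the root-level decomposition gives $f(n) = \sum_{j=0}^{n-1} f(j)$ with $f(0) = 1$, hence $f(n) = 2^{n-1}$. For $\sigma = 213$, the witness $(o_B, o_C, c_A)$ shows that a $213$ appears whenever a single arc $A$ contains at least two arcs in its span, so the avoidance condition is that every arc contains at most one arc strictly inside it; classifying the $m$ root arcs into simple and doubled arcs gives $\sum_{m} \binom{m}{n-m} = F_{n+1}$. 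For $\sigma = 312$, a $312$ appears whenever a non-leaf arc $B$ has any arc $A$ with opener before $o_B$; so the avoidance condition forces the unique non-leaf arc to be the outermost root, pinning the matching down as an outermost root $R_1$ with $k$ simple children followed by $n-1-k$ simple root-leaves, giving exactly $n$ matchings ($0 \leq k \leq n-1$).

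The main obstacle is the converse direction in each non-trivial case: proving that the described configuration is the \emph{only} way the forbidden pattern can appear. This requires, for each $\sigma \in \{132, 123, 231, 213, 312\}$, a careful case analysis on which of $i, j, k$ are openers versus closers of their respective arcs, combined with non-crossing and the decreasing-opener rule, which together pin down the relative order and nesting of the three underlying arcs. Once these equivalences are set up, each of the enumerations above reduces to a standard bijection or recursion, and the $\sigma = 321$ case is immediate.
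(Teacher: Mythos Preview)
Your proposal is correct; all six forbidden-configuration characterizations you state (including the converse directions you flag as the main obstacle) do go through, and the ensuing enumerations are accurate.

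Both you and the paper start from Theorem~\ref{thm:112}, but the executions differ. The paper argues each case directly on the permutation by locating the two copies of $n$: for $213$ it observes that the entries between the two $n$'s must be nondecreasing, forcing a start of $nn$ or $n(n{-}1)(n{-}1)n$ and hence the Fibonacci recursion $\bar q_n=\bar q_{n-1}+\bar q_{n-2}$; for $231$ it writes $\pi=nAnB$ with $A<B$ and notes both blocks nonempty would create a $122$, giving $\bar q_n=2\bar q_{n-1}$; for $312$ it uses $\pi_1=n$ to force the remaining entries into decreasing order. You instead translate each $\sigma$ into a forbidden arc configuration in the underlying non-crossing matching and then count matchings avoiding it: ``some arc contains at least two arcs'' for $213$ (yielding $\sum_m\binom{m}{n-m}=F_{n+1}$ via simple/doubled roots), ``$B\subset A$ with some $C$ entirely to the right of $A$'' for $231$ (yielding the summatory recursion $f(n)=\sum_{j<n}f(j)$), and ``a non-leaf arc with any earlier opener'' for $312$. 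For $123$ and $132$ the two approaches essentially coincide. Your route is more uniform and makes the matching structure explicit, at the cost of the opener/closer case analysis you mention; the paper's route is shorter in each case but more ad hoc.
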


\begin{proof}
Let us first note that by the proof of Theorem~\ref{thm:112}, any non-crossing permutation that avoids 122 must be obtained from labeling a non-crossing matching left-to-right in a decreasing way. In particular, for any $i$, the first occurrence of $i$ must appear before the first occurrence of $i-1$. 

It is clear no such labeling could allow there to be a 132 pattern. Indeed, for any $a<b<c,$ a 132 pattern would be of the form $acb$ and thus $c$ and $b$ would be the second occurrence of that element. But then $cbcb$ would be a subsequence of $\pi$, contradicting that it is non-crossing. It follows that $\bar{q}_n(122,132)=\bar{q}_n(122)=C_n$.

Let us consider those that avoid 213. If there are more than two distinct elements between the two occurrences of $n$, they must appear in nondecreasing order (since $\pi$ avoids 213) and so would form a $122$ pattern. Thus every permutation 
must begin with $nn$ or $n(n-1)(n-1)n$, and so $\bar{q}_n(122,213)=\bar{q}_{n-1}(122,213)+\bar{q}_{n-2}(122,213)$. Since $\bar{q}_1(122,213) =1$ and $\bar{q}_2(122,213) =2$, we have $\bar{q}_n(122,213) = F_n.$

Now let's consider those that avoid 231. 
Note that these permutations must be of the form $nAnB$ where the elements of $A$ are less than the elements of $B$. If both $A$ and $B$ are nonempty, then there is a subsequence $abb$ of $\pi$ that forms a 122 pattern. Therefore either $\pi_2=n$ or $\pi_{2n}=n.$ Since we can recursively obtain all permutations of semilength $n$ from those of semilength $n-1$ by inserting $nn$ at the beginning or by inserting $n$ at the front and $n$ at the end, $\bar{q}_n(122,231)=2\bar{q}_{n-1}(122,231)$. Together with the fact that $\bar{q}_1(122,231) =1$, we have $\bar{q}_n(122,231)=2^{n-1}.$

Next, consider those that avoid 123. Notice we cannot have a nest with three elements, i.e., we cannot have $cbaabc$ as a subsequence for any $a,b,c$ since we would have $c>b>a$ and thus $abc$ would be a 123 pattern. Therefore, our permutation is composed of contiguous segments of the form $i(i-1)(i-1)(i-2)(i-2)\ldots (i-k)(i-k)i$ and so these permutations are in bijection with compositions of $n$. It follows that $\bar{q}_n(122,123)=2^{n-1}.$

Next, if $\pi$ avoids 312, then since $\pi_1=n$, all other elements must appear in decreasing order. Thus $\pi = n(n-1)(n-1)(n-2)(n-2)\ldots(n-k)(n-k)n(n-k-1)(n-k-1)\ldots2211$ for some $0\leq k\leq n-1$; which proves $\bar{q}_n(122,312)=n.$

Finally, any permutation avoiding 122 must eventually contain a 321 pattern and so for $n\geq 3$, we have $\bar{q}_n(122,321)=0.$
\end{proof}

\section{Open Questions}\label{sec:open}

There are many future directions for this research. In particular, it is still open to enumerate non-nesting and non-crossing permutations that avoid the pattern 321. It is clear that both $p_n(321)$ and $\bar{p}_n(321)$ are bounded between $C_n$ and $C_n^2$, but enumerating them exactly seems to challenging. 
It may be reasonable to consider consecutive patterns as well, or to apply the methods in this paper to consider pattern avoidance in permutations on more general multisets that avoid the crossing or nesting patterns.

We also note that as stated in \cite{AGPS}, there is a nice bijection between non-crossing permutations and labeled rooted ordered trees. There has been some interest in pattern avoidance in unordered trees \cite{AA,GP,R24}. Studying pattern avoidance in non-crossing permutations can help address the question of pattern avoidance in ordered trees. In particular, the number of non-crossing permutations that avoid 23132 would be equal to the number of labeled rooted ordered trees that avoid 231.

\subsection*{Disclaimer}

{\emph{The views expressed in this paper are those of the authors and do not reflect the official policy or position of the U.S. Naval Academy, Department of the Navy, the Department of Defense, or the U.S. Government.}}


\end{document}